\newtheorem{theorem}{Theorem}[section]
\newtheorem{corollary}[theorem]{Corollary}
\newtheorem{lemma}[theorem]{Lemma}
\newtheorem{defin}[theorem]{Definition}
\newtheorem{examp}[theorem]{Example}
\newenvironment{example}{\begin{examp}\normalfont\quad}{\end{examp}}
\newtheorem{rema}[theorem]{Remark}
\newtheorem{prob}[theorem]{Problem}
\numberwithin{equation}{section}
\newcommand{\bt}{\begin{thm}}
\newcommand{\et}{\end{thm}}
\newcommand{\bp}{\begin{proof}}
\newcommand{\ep}{\end{proof}}
\newcommand{\bprop}{\begin{prop}}
\newcommand{\eprop}{\end{prop}}
\newcommand{\bl}{\begin{lemma}}
\newcommand{\el}{\end{lemma}}
\newcommand{\bc}{\begin{corollary}}
\newcommand{\ec}{\end{corollary}}
\newcommand{\Z}{\mathbb{Z}}
\newcommand{\C}{\mathbb{C}}
\newcommand{\be}{\begin{enumerate}}
\newcommand{\ee}{\end{enumerate}}
\newcommand{\OMIT}[1]{}
\title{Counting surface-kernel epimorphisms \\ from a co-compact Fuchsian group to a cyclic group \\ with motivations from string theory and QFT}
\author{Khodakhast Bibak \thanks{Department of Computer Science, University of Victoria, Victoria, BC, Canada V8W 3P6. Email: {\tt \{kbibak,bmkapron,srinivas\}@uvic.ca}} \and Bruce M. Kapron \footnotemark[1] \and Venkatesh Srinivasan \footnotemark[1]}
\begin{document}

\maketitle

\begin{abstract}
Graphs embedded into surfaces have many important applications, in particular, in combinatorics, geometry, and physics. For example, ribbon graphs and their counting is of great interest in string theory and quantum field theory (QFT). Recently, Koch, Ramgoolam, and Wen [Nuclear Phys.\,B {\bf 870} (2013), 530--581] gave a refined formula for  counting ribbon graphs and discussed its applications to several physics problems. An important factor in this formula is the number of surface-kernel epimorphisms from a co-compact Fuchsian group to a cyclic group. The aim of this paper is to give an explicit and practical formula for the number of such epimorphisms. As a consequence, we obtain an `equivalent' form of the famous Harvey's theorem on the cyclic groups of automorphisms of compact Riemann surfaces. Our main tool is an explicit formula for the number of solutions of restricted linear congruence recently proved by Bibak et al. using properties of Ramanujan sums and of the finite Fourier transform of arithmetic functions.
\end{abstract}

%{\bf Keywords:} Co-compact Fuchsian group; surface-kernel epimorphism; restricted linear congruence; ribbon graph

\section{Introduction}\label{Sec_1}

A {\it surface} is a compact oriented two-dimensional topological manifold. Roughly speaking, a surface is a space that `locally' looks like the Euclidean plane. Informally, a graph is said to be {\it embedded into} (or {\it drawn on}) a surface if it can be drawn on the surface in such a way that its edges meet only at their endpoints. A {\it ribbon graph} is a finite and connected graph together with a cyclic ordering on the set of half edges incident to each vertex. One can see that ribbon graphs and embedded graphs are essentially equivalent concepts; that is, a ribbon graph can be thought as a set of disks (or vertices) attached to each other by thin stripes (or edges) glued to their boundaries. There are several other names for these graphs in the literature, for example, {\it fat graphs}, or {\it combinatorial maps}, or {\it unrooted maps}. For a thorough introduction to the theory of embedded graphs we refer the reader to the lovely book by Lando and Zvonkin \cite{LZ}.

Graphs embedded into surfaces have many important applications, in particular, in combinatorics, geometry, and physics. For example, ribbon graphs and their counting is of great interest in string theory and quantum field theory (QFT). Here we quote some of these applications and motivations from \cite{KR, KRW}:

\begin{itemize}

   \item Ribbon graphs arise in the context of MHV rules for constructing amplitudes. In the MHV rules approach to amplitudes, inspired by twistor string theory, amplitudes are constructed by gluing MHV vertices. Counting ribbon graphs play an important role here in finding different ways of gluing the vertices which contribute to a given amplitude.

   \item The number of ribbon graphs is the fundamental combinatorial element in perturbative large $N$ QFT computations, since we need to be able to enumerate the graphs and then compute corresponding Feynman integrals. 

   \item In matrix models (more specifically, the Gaussian Hermitian and complex matrix models), which can be viewed as QFTs in zero dimensions, the correlators are related very closely to the combinatorics of ribbon graphs. There is also a two-dimensional structure (related to string worldsheets) to this combinatorics.

   \item There is a bijection between vacuum graphs of Quantum Electrodynamics (QED) and ribbon graphs. In fact, the number of QED/Yukawa vacuum graphs with $2v$ vertices is equal to the number of ribbon graphs with $v$ edges. This can be proved using permutations. Note that QED is an Abelian gauge theory with the symmetry circle group $U(1)$.

\end{itemize}

Mednykh and Nedela \cite{MENE} obtained a formula for the number of unrooted maps of a given genus. Recently, Koch, Ramgoolam, and Wen \cite{KRW} gave a refinement of that formula to make it more suitable for applications to several physics problems, like the ones mentioned above. In both formulas, there is an important factor, namely, the number of surface-kernel epimorphisms from a co-compact Fuchsian group to a cyclic group. A formula for the number of such epimorphisms is given in \cite{MENE} but that formula does not seem to be very applicable, especially for large values, because one needs to find, as part of the formula, a challenging summation involving the products of some Ramanujan sums and for each index of summation one needs to calculate these products. The aim of this paper is to give a very explicit and practical formula for the number of such epimorphisms. Our formula does not contain Ramanujan sums or other challenging parts, and is really easy to work with. As a consequence, we obtain an `equivalent' form of the famous Harvey's theorem on the cyclic groups of automorphisms of compact Riemann surfaces.

In the next section, we review Fuchsian groups and Harvey's theorem. Our main tool in this paper is an explicit formula for the number of solutions of restricted linear congruence recently proved by Bibak et al. \cite{BKSTT} using properties of Ramanujan sums and of the finite Fourier transform of arithmetic functions, which is reviewed in Section~\ref{Sec_3}. Our main result is presented in Section~\ref{Sec_4}.

\section{Fuchsian groups and Harvey's theorem}\label{Sec_2}

A {\it Fuchsian group} $\Gamma$ is a finitely generated non-elementary discrete subgroup of $\text{PSL}(2,\mathbb{R})$, the group of orientation-preserving isometries of the hyperbolic plane $\mathbb{H}^2$. Fuchsian groups were first studied by Poincar\'{e} in 1882 in connection with the uniformization problem (later the uniformization theorem), and he called the groups Fuchsian after Lazarus Fuchs whose paper (1880) was a motivation for Poincar\'{e} to introduce this concept. By a classical result of Fricke and Klein (see, e.g., \cite{ZVC}), every such group $\Gamma$ has a presentation in terms of the generators $\textsf{a}_1,\textsf{b}_1,\ldots,\textsf{a}_g,\textsf{b}_g$ (hyperbolic), $\textsf{x}_1,\ldots,\textsf{x}_k$ (elliptic), $\textsf{y}_1,\ldots,\textsf{y}_s$ (parabolic), and $\textsf{z}_1,\ldots,\textsf{z}_t$ (hyperbolic boundary elements) with the relations
\begin{align} \label{Fuchsian relations}
\textsf{x}_1^{n_1} = \cdots = \textsf{x}_k^{n_k} = \textsf{x}_1 \cdots \textsf{x}_k\textsf{y}_1 \cdots \textsf{y}_s\textsf{z}_1 \cdots \textsf{z}_t[\textsf{a}_1, \textsf{b}_1]\cdots [\textsf{a}_g, \textsf{b}_g]=1,
\end{align}
where $k,s,t,g \geq 0$, $n_i\geq 2$ $(1\leq i \leq k)$, and $[a, b] = a^{-1}b^{-1}ab$. The integers $n_1,\ldots,n_k$ are called the {\it periods} of $\Gamma$, and $g$ is called the {\it orbit genus}. The Fuchsian group $\Gamma$ is determined, up to isomorphism, by the tuple $(g;n_1,\ldots,n_k;s;t)$ which is referred to as the {\it signature} of $\Gamma$. If $k=0$ (i.e., there are no periods), $\Gamma$ is called a Fuchsian {\it surface group}. If $s=t=0$, the group is called {\it co-compact} (or {\it F-group}, or {\it proper}). Some authors by a Fuchsian group mean a co-compact Fuchsian group. In this paper, we only work with co-compact Fuchsian groups.

We denotes by $\text{Hom}(\Gamma,G)$ (resp., $\text{Epi}(\Gamma,G)$) the set of homomorphisms (resp., epimorphisms) from a Fuchsian group $\Gamma$ to a finite group $G$. There is much interest (with many applications) in combinatorics, geometry, algebra, and physics, in counting homomorphisms and epimorphisms from a Fuchsian group to a finite group. For example, Liebeck and Shalev \cite{LS1, LS2} obtained good estimates for $|\text{Hom}(\Gamma,G)|$, where $\Gamma$ is an arbitrary Fuchsian group and $G$ is a symmetric group or an alternating group or a finite simple group.

An epimorphism from a Fuchsian group to a finite group with kernel a Fuchsian surface group is called {\it surface-kernel} (or {\it smooth}). Harvey proved that an epimorphism $\phi$ from a co-compact Fuchsian group $\Gamma$ to a finite group $G$ is surface-kernel if and only if it preserves the periods of $\Gamma$, that is, for every elliptic generator $\textsf{x}_i$ $(1\leq i \leq k)$ of order $n_i$, the order of $\phi(\textsf{x}_i)$ is precisely $n_i$. The above-mentioned equivalence appears in Harvey's influential 1966 paper \cite{HAR} on the cyclic groups of automorphisms of compact Riemann surfaces. The main result of this paper is the following theorem which gives necessary and sufficient conditions for the existence of a surface-kernel epimorphism from a co-compact Fuchsian group to a cyclic group.

\begin{theorem} {\rm (\cite{HAR})} \label{Harvey Thm}
Let $\Gamma$ be a co-compact Fuchsian group with signature $(g;n_1,\ldots,n_k)$, and let $\mathfrak{n}:=\textnormal{lcm}(n_1,\ldots ,n_k)$. There is a surface-kernel epimorphism from $\Gamma$ to $\Z_n$ 
if and only if the following conditions are satisfied:

(i) $\textnormal{lcm}(n_1,\ldots ,n_{i-1}, n_{i+1},\ldots, n_k)=\mathfrak{n}$, for all $i$;

(ii) $\mathfrak{n}\mid n$, and if $g = 0$ then $\mathfrak{n}=n$;

(iii) $k\not=1$, and, if $g=0$ then $k>2$;

(iv) if $\mathfrak{n}$ is even then the number of periods $n_i$ such that $\mathfrak{n}/n_i$ is odd is also even.
\end{theorem}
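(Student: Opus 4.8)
The plan is to exploit that the target $\Z_n$ is abelian, so every homomorphism $\phi\colon\Gamma\to\Z_n$ factors through the abelianization of $\Gamma$; in particular all commutators $[\mathsf a_j,\mathsf b_j]$ die. Writing $m_i:=\phi(\mathsf x_i)\in\Z_n$, the defining relations of a co-compact $\Gamma$ collapse to the two arithmetic conditions $n_i m_i\equiv 0\pmod n$ for each $i$ and $m_1+\cdots+m_k\equiv 0\pmod n$, while the images $\phi(\mathsf a_j),\phi(\mathsf b_j)$ remain free. By the period-preserving characterization quoted above, $\phi$ is surface-kernel exactly when $\mathrm{ord}(m_i)=n_i$, i.e.\ $\gcd(m_i,n)=n/n_i$; this already forces $n_i\mid n$ for all $i$, equivalently $\mathfrak n\mid n$. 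For surjectivity I would argue in two regimes: if $g\geq 1$ one may set one hyperbolic generator to a generator of $\Z_n$, so surjectivity costs nothing; if $g=0$ the image is $\langle m_1,\dots,m_k\rangle$, which in a cyclic group has order $\mathrm{lcm}(\mathrm{ord}\,m_i)=\mathfrak n$, so surjectivity is equivalent to $\mathfrak n=n$. This disposes of condition (ii).

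The heart of the matter is then purely arithmetic: decide when there exist $m_1,\dots,m_k$ with $\gcd(m_i,n)=n/n_i$ and $\sum_i m_i\equiv 0\pmod n$ --- a restricted linear congruence of exactly the type treated by the tool of Section~\ref{Sec_3}. First I would split the problem prime-by-prime via the Chinese Remainder Theorem: fixing a prime $p\mid n$ and writing $a=v_p(n)$ and $\beta=v_p(\mathfrak n)=\max_i v_p(n_i)$ for the $p$-adic valuation $v_p$, each $m_i$ contributes a local coordinate $m_i\equiv p^{\,a-v_p(n_i)}w_i\pmod{p^{a}}$ with $p\nmid w_i$, and the sum condition becomes $\sum_i p^{\,\beta-v_p(n_i)}w_i\equiv 0\pmod{p^{\beta}}$ after dividing out the common factor $p^{\,a-\beta}$. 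Let $r=r_p$ denote the number of indices attaining $v_p(n_i)=\beta$.

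Reducing this last congruence modulo $p$, only the $r$ ``dominant'' terms survive, giving $\sum_{i:\,v_p(n_i)=\beta}w_i\equiv 0\pmod p$ with every $w_i$ a unit. For odd $p$ this is solvable precisely when $r\geq 2$, and the mod-$p$ solution lifts to one modulo $p^{\beta}$ because any target divisible by $p$ is a sum of $r\geq 2$ units of $\Z/p^{\beta}\Z$ (for $r=2$, if $p\mid T$ then for every unit $w_1$ the complement $T-w_1$ is again a unit); for $p=2$ the only unit is odd, so the dominant sum is $\equiv r\pmod 2$ and solvability forces $r$ even. The final bookkeeping step translates these local conditions back to the statement: $r_p\geq 2$ for every $p\mid\mathfrak n$ is exactly condition (i), since removing a period lowers $\mathfrak n$ iff that period is the unique carrier of some maximal $p^{\beta}$, while $r_2$ even is exactly condition (iv), since the periods with $\mathfrak n/n_i$ odd are precisely those carrying the top power of $2$. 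I expect this lifting-and-parity step, and in particular matching the $p=2$ analysis to (iv), to be the main obstacle, since it is where the genuinely number-theoretic content --- and the asymmetry between odd primes and $2$ --- resides.

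Finally I would dispatch condition (iii). If $k=1$ the sum condition reads $m_1\equiv 0$, forcing $\mathrm{ord}(m_1)=1\neq n_1$, so no surface-kernel epimorphism exists; thus $k\neq 1$ (this is in any case already implied by (i)). For $g=0$, a signature with $k\leq 2$ fails to define a hyperbolic, hence co-compact Fuchsian, group, so the standing hypothesis that $\Gamma$ is such a group yields $k>2$. Assembling the three ingredients --- order-realizability together with surjectivity (giving (ii)), the prime-local solvability of the restricted congruence (giving (i) and (iv)), and the nondegeneracy of $\Gamma$ (giving (iii)) --- establishes the equivalence in both directions.
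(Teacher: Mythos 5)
Your proof is correct in substance, but it cannot be compared line-by-line with ``the paper's proof'' because the paper offers none: Theorem~\ref{Harvey Thm} is quoted from Harvey \cite{HAR} without proof, and the only related argument in the paper runs in the opposite direction --- Corollary~\ref{Harvey Equ} extracts an equivalent existence criterion ($\mathfrak{n}\mid n$ with $\mathfrak{n}=n$ when $g=0$; $e_p>1$ for every prime $p\mid\mathfrak{n}$; $e_2$ even when $\mathfrak{n}$ is even) by checking when the factors in the counting formula \eqref{sepi_formula} of Theorem~\ref{sepi result} vanish, and that formula in turn imports the closed-form solution count of Theorem~\ref{th_gen_expl} wholesale. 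You share the paper's first two moves (abelianization reduces everything to the restricted congruence $x_1+\cdots+x_k\equiv 0 \pmod{n}$ with $\gcd(x_i,n)=n/n_i$, and Harvey's period-preservation criterion identifies the surface-kernel maps), but then you diverge genuinely: instead of invoking Theorem~\ref{th_gen_expl} or Corollary~\ref{cor_zero}, you settle solvability by hand, prime by prime via CRT, your $r_p$ being exactly the paper's $e_p$; in effect you re-prove precisely the special case of Corollary~\ref{cor_zero} (namely $a_i=1$, $b=0$, $t_i=n/n_i$) that the paper treats as given. Your route buys a self-contained, elementary existence proof in the spirit of Harvey's original, making the $p=2$ parity obstruction behind condition (iv) transparent; the paper's route buys more than existence, namely the exact value of $|\textnormal{Epi}_{\textnormal{S}}(\Gamma,\Z_n)|$, of which nonexistence is just the degenerate case. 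Two spots in your write-up deserve tightening. First, your lifting claim --- that any target divisible by $p$ is a sum of $r\geq 2$ units of $\Z/p^{\beta}\Z$ --- is verified only for $r=2$; for general $r$ set $w_3=\cdots=w_r=1$ and reduce to the two-unit case, using that $r$ is even when $p=2$ so the adjusted target stays even, and that for odd $p$ one can pick $w_1\not\equiv 0$ and $w_1\not\equiv T'\pmod{p}$ since $p\geq 3$. Second, the equivalence of ``$r_p\geq 2$ for all $p\mid\mathfrak{n}$'' with condition (i) should get its one-line proof: deleting $n_i$ lowers $\textnormal{lcm}$ of the remaining periods iff $i$ is the unique index attaining the maximal $p$-adic valuation $v_p(n_i)=v_p(\mathfrak{n})$ for some prime $p\mid\mathfrak{n}$; with that stated, your assembly of (i)--(iv) is complete and correct, including the observation that $k\neq 1$ follows from the sum condition and that $k>2$ for $g=0$ is forced by hyperbolicity of $\Gamma$ rather than by the epimorphism.
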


By a result of Burnside \cite{BUR}, and of Greenberg \cite{GRE}, every finite group $G$ acts as a group of automorphisms of a compact Riemann surface of genus at least two. The {\it minimum genus} problem asks to find, for a given finite group $G$, the minimum genus of those compact Riemann surfaces on which $G$ acts faithfully as a group of conformal automorphisms. Harvey \cite{HAR}, using Theorem~\ref{Harvey Thm}, solved the minimum genus problem when $G$ is the cyclic group $\Z_n$; in fact, he gave an explicit value for the minimum genus in terms of the prime factorization of $n$. Then, as a corollary, he obtained a famous result of Wiman \cite{WIM} on the {\it maximum order} for an automorphism of a compact Riemann surface of genus $\gamma$ by showing that this maximum order is $2(2\gamma+1)$.

Harvey's paper \cite{HAR} played a pioneering role in studying groups of automorphisms of compact Riemann surfaces and also has found important applications in some other areas of mathematics like combinatorics. See, for example, the survey by Bujalance et al. \cite{BCG} on the ``research inspired by Harvey's theorem", in which the authors describe many results about the actions of several classes of groups, including cyclic, Abelian, solvable, dihedral, etc., along with the minimum genus and maximum order problems for these classes.

\section{Ramanujan sums and restricted linear congruences}\label{Sec_3}

Throughout the paper we use $\gcd(a_1,\ldots,a_k)$ and $\textnormal{lcm}(a_1,\ldots,a_k)$ to denote, respectively, the greatest common divisor and the least common multiple of integers $a_1,\ldots,a_k$. For $a \in \Z \setminus \lbrace 0 \rbrace$ and $b \in \Z$, by $a \mid b$ and $a \nmid b$ we mean, respectively, $a$ is a divisor of $b$, and $a$ is not a divisor of $b$. Also, for $a \in \Z \setminus \lbrace 0 \rbrace$ and a prime $p$ we use the notation $p^r\, \|\, a$ if $p^r\mid a$ and $p^{r+1}\nmid a$. A function $f:\Z \to \C$ is called {\it periodic} with period $n$ (also called {\it $n$-periodic} or {\it periodic} modulo $n$) if $f(m + n) = f(m)$, for every $m\in \mathbb{Z}$. In this case $f$ is determined by the finite vector $(f(1),\ldots,f(n))$.

Let $e(x)=\exp(2\pi ix)$ be the complex exponential with period 1. For integers $m$ and $n$ with $n \geq 1$, the quantity 
\begin{align}\label{def1}
c_n(m) = \mathlarger{\sum}_{\substack{j=1 \\ (j,n)=1}}^{n}
e\!\left(\frac{jm}{n}\right)
\end{align}
is called a {\it Ramanujan sum}, which is also denoted by $c(m,n)$ in the literature. From (\ref{def1}) it is clear that $c_n(m)$ is a periodic function of $m$ with period $n$.

Clearly, $c_n(0)=\varphi (n)$, where $\varphi (n)$ is {\it Euler's totient function}. Also, $c_n(1)=\mu (n)$, where $\mu (n)$ is the {\it M\"{o}bius function} defined by
\begin{align}\label{def2}
 \mu (n)&=
  \begin{cases}
    1, & \text{if $n=1$,}\\
    0, & \text{if $n$ is not square-free,}\\
    (-1)^{\kappa}, & \text{if $n$ is the product of $\kappa$ distinct primes}.
  \end{cases}
\end{align}

The classical version of the M\"{o}bius inversion formula states that if $f$ and $g$ are arithmetic functions satisfying $g(n)=\sum_{d\, \mid\, n} f(d)$, for every integer $n\geq 1$, then 
\begin{align} \label{inver form}
f(n)=\mathlarger{\sum}_{d\, \mid\, n} \mu \!\left(\frac{n}{d}\right)g(d),
\end{align}
for every integer $n\geq 1$.

Let $a_1,\ldots,a_k,b,n\in \Z$, $n\geq 1$. A linear congruence in $k$ unknowns $x_1,\ldots,x_k$ is of the form
\begin{align} \label{cong form}
a_1x_1+\cdots +a_kx_k\equiv b \pmod{n}.
\end{align}
By a solution of (\ref{cong form}), we mean an ordered $k$-tuple $\mathbf{x}=\langle x_1,\ldots,x_k \rangle \in \mathbb{Z}_n^k$ that satisfies (\ref{cong form}).

The solutions of the above congruence may be subject to certain conditions, such as $\gcd(x_i,n)=t_i$ ($1\leq i\leq k$), where $t_1,\ldots,t_k$ are given positive divisors of $n$. The number of solutions of these congruences, which were called {\it restricted linear congruences} in \cite{BKSTT}, was first considered by Rademacher \cite{Rad1925} in 1925 and Brauer \cite{Bra1926} in 1926, in the special case of $a_i=t_i=1$ $(1\leq i \leq k)$ (see Corollary~\ref{RB thm} below). Since then, this problem has been studied, in several other special cases, in many papers (for example, Cohen \cite{COH2} dealt with the special case of $t_i=1$, $a_i \mid n$, $a_i$ prime, for all $i$) and has found very interesting applications in number theory, combinatorics, computer science, and cryptography, among other areas; see \cite{BKSTT3, BKSTT, BKSTT2} for a detailed discussion about this problem and a comprehensive list of references. Recently, Bibak et al. \cite{BKSTT} dealt with the problem in its `most general case' and using properties of Ramanujan sums and of the finite Fourier transform of arithmetic functions gave an explicit formula for the number of solutions of the restricted linear congruence
\begin{equation} \label{gen_rest_cong}
a_1x_1+\cdots +a_kx_k\equiv b \pmod{n}, \quad \gcd(x_i,n)=t_i \ (1\leq i\leq k),
\end{equation}
where $a_1,t_1,\ldots,a_k,t_k, b,n$ ($n\geq 1$) are arbitrary integers.

\begin{theorem} {\rm (\cite{BKSTT})} \label{thm:k var1 more} Let $a_i,t_i, b,n\in \Z$, $n\geq 1$, $t_i\mid n$ {\rm ($1\leq i\leq k$)}. The number of solutions of the linear congruence $a_1x_1+\cdots +a_kx_k\equiv b \pmod{n}$, with $\gcd(x_i,n)=t_i$ {\rm ($1\leq i\leq k$)}, is
\begin{align} \label{thm:k var1 more for}
N_n(b;a_1,t_1,\ldots,a_k,t_k) & =\frac{1}{n}\left(\mathlarger{\prod}_{i=1}^{k}\frac{\varphi\left(\frac{n}{t_i}\right)}{\varphi\left(\frac{n}{t_id_i}\right)}\right)
\mathlarger{\sum}_{d\, \mid\, n}
c_d(b)\mathlarger{\prod}_{i=1}^{k}c_{\frac{n}{t_id_i}}\! \left(\frac{n}{d}\right)\\
& = \label{thm:k var2 more for}
\frac{1}{n} \left(\mathlarger{\prod}_{i=1}^{k}  \varphi\left(\frac{n}{t_i}\right)\right) \mathlarger{\sum}_{d\, \mid\, n}
c_d(b) \mathlarger{\prod}_{i=1}^{k} \frac{\mu\left(\frac{d}{\gcd(a_it_i,d)}\right)}{\varphi\left(\frac{d}{\gcd(a_it_i,d)}\right)},
\end{align}
where $d_i=\gcd(a_i,\frac{n}{t_i})$ {\rm ($1\leq i\leq k$)}.
\end{theorem}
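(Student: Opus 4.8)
The plan is to count the solutions by additive-character orthogonality and then massage the resulting character sum into the Ramanujan-sum shape of \eqref{thm:k var1 more for}. First I would write the indicator of the congruence $a_1x_1+\cdots+a_kx_k\equiv b\pmod n$ as $\frac1n\sum_{j=0}^{n-1}e\!\left(\frac{j(a_1x_1+\cdots+a_kx_k-b)}{n}\right)$, sum over all admissible tuples, and interchange the order of summation. Because the constraint $\gcd(x_i,n)=t_i$ decouples across the coordinates, this factorises as
\[
N_n(b;a_1,t_1,\ldots,a_k,t_k)=\frac1n\sum_{j=0}^{n-1}e\!\left(\frac{-jb}{n}\right)\prod_{i=1}^{k}S_i(j),\qquad S_i(j)=\sum_{\substack{x_i\bmod n\\ \gcd(x_i,n)=t_i}}e\!\left(\frac{ja_ix_i}{n}\right).
\]
Writing $x_i=t_iy_i$ with $\gcd(y_i,n/t_i)=1$ turns the inner sum into $S_i(j)=c_{n/t_i}(ja_i)$ directly from definition \eqref{def1}, so that $N_n=\frac1n\sum_{j=0}^{n-1}e(-jb/n)\prod_i c_{n/t_i}(ja_i)$.

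Next I would organise the outer sum by the value $d:=n/\gcd(j,n)$, a divisor of $n$; for fixed $d$ the relevant $j$ are exactly $j=(n/d)\ell$ with $\gcd(\ell,d)=1$, whence $e(-jb/n)=e(-\ell b/d)$ and $\sum_{\gcd(\ell,d)=1}e(-\ell b/d)=c_d(b)$ (using that $c_d$ is real, so $c_d(-b)=c_d(b)$). The crux is that the product $\prod_i c_{n/t_i}\!\left(\frac nd\,\ell\,a_i\right)$ does not depend on $\ell$ once $\gcd(\ell,d)=1$: since $c_q(m)=c_q(\gcd(m,q))$, it suffices to check that $\gcd\!\left(\frac nd\ell a_i,\,n/t_i\right)$ is independent of $\ell$, which a prime-by-prime valuation count confirms (for $p\mid d$ one has $p\nmid\ell$; for $p\nmid d$ the factor $n/d$ already carries the full $p$-part of $n$). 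Evaluating the product at $\ell=1$ and pulling $c_d(b)$ out then yields $N_n=\frac1n\sum_{d\mid n}c_d(b)\prod_i c_{n/t_i}\!\left(\frac nd a_i\right)$.

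It remains to reduce $c_{n/t_i}(\frac nd a_i)$ to the normalised form in \eqref{thm:k var1 more for}. Here I would invoke Hölder's identity $c_q(m)=\varphi(q)\,\mu\!\left(q/\gcd(q,m)\right)/\varphi\!\left(q/\gcd(q,m)\right)$, which shows $c_q(m)$ is governed entirely by $q/\gcd(q,m)$ up to the scalar $\varphi(q)$. With $q=n/t_i$ and $d_i=\gcd(a_i,n/t_i)$ the key arithmetic identity to verify is $\frac{n/t_i}{\gcd(n/t_i,\,(n/d)a_i)}=\frac{n/(t_id_i)}{\gcd(n/(t_id_i),\,n/d)}$, again a routine $p$-adic valuation comparison; it gives at once $c_{n/t_i}(\frac nd a_i)=\frac{\varphi(n/t_i)}{\varphi(n/(t_id_i))}\,c_{n/(t_id_i)}(n/d)$ and hence formula \eqref{thm:k var1 more for}. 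For the equivalent form \eqref{thm:k var2 more for} I would apply Hölder once more to $c_{n/(t_id_i)}(n/d)$; the factor $\varphi(n/(t_id_i))$ cancels the denominator, leaving the prefactor $\prod_i\varphi(n/t_i)$, while a final valuation check establishes $\frac{n/(t_id_i)}{\gcd(n/(t_id_i),\,n/d)}=\frac{d}{\gcd(a_it_i,d)}$, converting the $\mu/\varphi$ quotient into the stated one.

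I expect the main obstacle to be purely bookkeeping rather than conceptual: the two gcd/valuation identities that let Hölder's formula collapse the various Ramanujan sums to a common modulus. Each reduces to a short case analysis on $v_p$ of $a_i,t_i,d,n$, but one must handle carefully the interplay between the condition $t_i\mid n$, the definition $d_i=\gcd(a_i,n/t_i)$, and the divisor $d\mid n$; a sign or a $\min$/$\max$ slip there is the easiest place to go wrong. Everything else — the orthogonality, the regrouping, and the evenness of $c_d$ — is standard once the correct reindexing $d=n/\gcd(j,n)$ is chosen.
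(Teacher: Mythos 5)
Your proof is correct: the orthogonality expansion, the evaluation $S_i(j)=c_{n/t_i}(ja_i)$ after the substitution $x_i=t_iy_i$, the reindexing $d=n/\gcd(j,n)$ together with the $\ell$-independence of $\gcd\bigl(\tfrac{n}{d}\ell a_i,\tfrac{n}{t_i}\bigr)$, and the two applications of H\"older's identity all go through, and I checked both $p$-adic valuation identities (they split into the cases $v_p(a_i)\geq v_p(n/t_i)$ and $v_p(a_it_i)\gtrless v_p(d)$, and hold in each). Note that this paper does not itself prove the theorem but imports it from \cite{BKSTT}, whose proof proceeds by exactly the route you chose --- Ramanujan sums as the finite Fourier transform of the gcd-indicator, followed by H\"older's identity --- so your argument is essentially the same as the source's, just phrased with explicit exponential sums rather than the formalism of even functions $\pmod n$.
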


While Theorem~\ref{thm:k var1 more} is useful from several aspects (for example, we use it in the proof of Theorem~\ref{sepi result}), for many applications (for example, the ones considered in this paper) we need a more explicit formula.

If in \eqref{gen_rest_cong} one has $a_i=0$ for every $1\leq i\leq k$, then clearly there are solutions 
$\langle x_1,\ldots,x_k\rangle$ if and only if $b\equiv 0\pmod{n}$ and $t_i \mid n$ ($1\leq i\leq k$), and in this case there are $\varphi(n/t_1)\cdots \varphi(n/t_k)$ solutions.

Consider the restricted linear congruence \eqref{gen_rest_cong} and assume that there is an $i_0$ such that $a_{i_0}\ne 0$. For every prime divisor $p$ of $n$ let $r_p$ be the exponent of $p$ in the prime factorization of $n$ and let $\mathfrak{m}_p=\mathfrak{m}_p(a_1,t_1,\ldots,a_k,t_k)$ denote the smallest $j\geq 1$ such that there is some $i$ with $p^j \nmid a_it_i$. There exists a finite $\mathfrak{m}_p$ for every $p$, since for a sufficiently large $j$ one has $p^j\nmid a_{i_0}t_{i_0}$. Furthermore, let
$$
e_p = e_p(a_1,t_1,\ldots,a_k,t_k) = \# \{i: 1\leq i\leq k, p^{\mathfrak{m}_p}\nmid a_it_i \}.
$$
By definition, $1 \leq e_p \leq$ the number of $i$ such that $a_i\ne 0$. Note that in many situations instead of $\mathfrak{m}_p(a_1,t_1,\ldots,a_k,t_k)$ we write $\mathfrak{m}_p$ and instead of $e_p(a_1,t_1,\ldots,a_k,t_k)$ we write $e_p$ for short. However, it is important to note that both $\mathfrak{m}_p$ 
and $e_p$ always depend on $a_1,t_1,\ldots,a_k,t_k,p$.

\begin{theorem} {\rm (\cite{BKSTT})} \label{th_gen_expl} Let $a_i,t_i, b,n\in \Z$, $n\geq 1$, $t_i\mid n$ {\rm ($1\leq i\leq k$)} and assume that $a_i\neq 0$ for at least one $i$. Consider the linear congruence $a_1x_1+\cdots +a_kx_k\equiv b \pmod{n}$, with $\gcd(x_i,n)=t_i$ {\rm ($1\leq i\leq k$)}. If there is a prime $p\mid n$ such that $\mathfrak{m}_p\leq r_p$ and $p^{\mathfrak{m}_p-1}\nmid b$ or $\mathfrak{m}_p\geq r_p+1$ and $p^{r_p}\nmid b$, then the linear congruence has no solution. Otherwise, the number of solutions is
\begin{equation} \label{main_prod_formula}
\mathlarger{\prod}_{i=1}^{k} \varphi\left(\frac{n}{t_i}\right)
\mathlarger{\prod}_{\substack{p\,\mid\, n \\ \mathfrak{m}_p \,\leq \, r_p \\ p^{\mathfrak{m}_p} \,\mid\, b}} p^{\mathfrak{m}_p-r_p-1} \left(1-\frac{(-1)^{e_p-1}}{(p-1)^{e_p-1}} \right)
\mathlarger{\prod}_{\substack{p\, \mid\, n \\ \mathfrak{m}_p \,\leq \, r_p \\ p^{\mathfrak{m}_p-1} \, \|\, b}} p^{\mathfrak{m}_p-r_p-1} \left(1-\frac{(-1)^{e_p}}{(p-1)^{e_p}}\right),
\end{equation}
where the last two products are over the prime factors $p$ of $n$ with the given additional properties. Note that the last product is empty and equal to $1$ if $b=0$.
\end{theorem}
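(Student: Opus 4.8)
The plan is to start from the Ramanujan-sum expression \eqref{thm:k var2 more for} in Theorem~\ref{thm:k var1 more} and to simplify its inner sum $\sum_{d\,\mid\, n} c_d(b)\prod_{i=1}^{k}\mu(d/\gcd(a_it_i,d))/\varphi(d/\gcd(a_it_i,d))$ into a product over the primes dividing $n$. The crucial observation is that, for fixed $b$ and fixed data $a_i,t_i$, the summand is a multiplicative function of $d$: the Ramanujan sum $c_d(b)$ is multiplicative in $d$, and each factor $\mu(d/\gcd(a_it_i,d))/\varphi(d/\gcd(a_it_i,d))$ is multiplicative in $d$ as well. Hence the sum over divisors $d\mid n$ factors as $\prod_{p\,\mid\, n} S_p$, where $S_p=\sum_{j=0}^{r_p} c_{p^j}(b)\,P_p(j)$ and $P_p(j):=\prod_{i=1}^{k}\mu(p^{j-\min(v_p(a_it_i),j)})/\varphi(p^{j-\min(v_p(a_it_i),j)})$, with $v_p$ the $p$-adic valuation and using $\gcd(a_it_i,p^j)=p^{\min(v_p(a_it_i),j)}$. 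Writing $\tfrac1n=\prod_{p\,\mid\, n}p^{-r_p}$ and keeping $\prod_i\varphi(n/t_i)$ out front, the theorem will follow once each local factor $p^{-r_p}S_p$ is evaluated.

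First I would compute the arithmetic factor $P_p(j)$. For each $i$ the exponent $j-\min(v_p(a_it_i),j)$ is $0$ if $p^j\mid a_it_i$, is $1$ if $p^{j-1}\, \|\, a_it_i$, and is $\geq 2$ otherwise, so by \eqref{def2} the corresponding factor equals $1$, $-1/(p-1)$, or $0$ respectively. Comparing with the definition of $\mathfrak m_p$ (the least $j$ with $p^j\nmid a_it_i$ for some $i$) and of $e_p$, this yields $P_p(j)=1$ for $j<\mathfrak m_p$, then $P_p(\mathfrak m_p)=(-1)^{e_p}/(p-1)^{e_p}$, and $P_p(j)=0$ for $j>\mathfrak m_p$; in particular only the terms $0\leq j\leq\min(\mathfrak m_p,r_p)$ survive in $S_p$.

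Next I would evaluate the Ramanujan sums at prime powers via $c_{p^j}(b)=\varphi(p^j)$ when $p^j\mid b$, $c_{p^j}(b)=-p^{j-1}$ when $p^{j-1}\, \|\, b$, and $c_{p^j}(b)=0$ otherwise, together with the telescoping identity $\sum_{j=0}^{J}c_{p^j}(b)=p^{J}$ if $p^J\mid b$ and $=0$ if $p^J\nmid b$. Splitting into the cases $\mathfrak m_p\geq r_p+1$ and $\mathfrak m_p\leq r_p$, and within the latter comparing $v_p(b)$ against $\mathfrak m_p$, the local factor $p^{-r_p}S_p$ is seen to equal $1$ when $\mathfrak m_p\geq r_p+1$ and $p^{r_p}\mid b$; to equal $p^{\mathfrak m_p-r_p-1}\bigl(1-(-1)^{e_p-1}/(p-1)^{e_p-1}\bigr)$ when $\mathfrak m_p\leq r_p$ and $p^{\mathfrak m_p}\mid b$; and to equal $p^{\mathfrak m_p-r_p-1}\bigl(1-(-1)^{e_p}/(p-1)^{e_p}\bigr)$ when $\mathfrak m_p\leq r_p$ and $p^{\mathfrak m_p-1}\, \|\, b$. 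In the two remaining situations, namely $\mathfrak m_p\geq r_p+1$ with $p^{r_p}\nmid b$, or $\mathfrak m_p\leq r_p$ with $p^{\mathfrak m_p-1}\nmid b$, the relevant partial sum collapses to $0$, forcing $S_p=0$ and hence no solutions, which are precisely the excluded cases in the statement. Multiplying the surviving local factors, and noting that the $p^{\mathfrak m_p-1}\, \|\, b$ product is empty when $b=0$ since then $v_p(b)=\infty$, reproduces \eqref{main_prod_formula}.

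I expect the main obstacle to be the careful bookkeeping of the three-way boundary interactions among $\mathfrak m_p$, $r_p$, and $v_p(b)$: one must track precisely when the last surviving term $j=\mathfrak m_p$ lies inside the range $j\leq r_p$, and combine it with the telescoped partial sum $\sum_{j<\mathfrak m_p}c_{p^j}(b)$ so that the algebraic identity $-(-1)^{e_p-1}=(-1)^{e_p}$ and the value $\varphi(p^{\mathfrak m_p})=p^{\mathfrak m_p-1}(p-1)$ convert the resulting bracket into the exact form appearing in \eqref{main_prod_formula}. Verifying the multiplicativity of the summand and the clean vanishing that yields the no-solution criteria are the conceptual crux; the remaining manipulations are routine.
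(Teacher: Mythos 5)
The paper itself states Theorem~\ref{th_gen_expl} without proof, citing \cite{BKSTT}, and your derivation follows exactly the route of that source: starting from \eqref{thm:k var2 more for}, factoring the divisor sum over primes via multiplicativity of $c_d(b)$ and of each $\mu/\varphi$ factor, and evaluating each local factor $p^{-r_p}S_p$ using the prime-power values of Ramanujan sums together with the telescoping identity $\sum_{j=0}^{J}c_{p^j}(b)=p^J$ when $p^J\mid b$ and $0$ otherwise. Your case analysis is correct throughout — $P_p(j)=1$ for $j<\mathfrak{m}_p$, $P_p(\mathfrak{m}_p)=(-1)^{e_p}/(p-1)^{e_p}$, $P_p(j)=0$ for $j>\mathfrak{m}_p$, the three surviving boundary cases matching the two products in \eqref{main_prod_formula}, the two vanishing cases matching the no-solution criteria, and the empty second product when $b=0$ — so the proposal is a correct proof taking essentially the same approach as the cited original.
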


Interestingly, if in Theorem~\ref{th_gen_expl} we put $a_i=t_i=1$ $(1\leq i \leq k)$ then we get the following result first proved by Rademacher \cite{Rad1925} in 1925 and Brauer \cite{Bra1926} in 1926.

\begin{corollary} \label{RB thm}
Let $b,n\in \Z$ and $n\geq 1$. The number of solutions of the linear congruence $x_1+\cdots +x_k\equiv b \pmod{n}$, with $\gcd(x_i,n)=1$ ($1\leq i\leq k$) is
\begin{align} \label{rest_cong_1}
\frac{\varphi(n)^k}{n} \mathlarger{\prod}_{p\, \mid \, n, \,
p\, \mid\, b} \!\left(1-\frac{(-1)^{k-1}}{(p-1)^{k-1}}
\right)\mathlarger{\prod}_{p\, \mid\, n, \, p \, \nmid \, b}
\!\left(1-\frac{(-1)^k}{(p-1)^k}\right).
\end{align}
\end{corollary}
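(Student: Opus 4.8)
The plan is to derive the corollary as the special case $a_i = t_i = 1$ $(1 \le i \le k)$ of Theorem~\ref{th_gen_expl}, so that the whole argument amounts to a substitution into, and simplification of, the formula \eqref{main_prod_formula}. First I would record how the auxiliary quantities collapse under this substitution. Since $t_i = 1$, the prefactor $\prod_{i=1}^k \varphi(n/t_i)$ becomes $\varphi(n)^k$. Since $a_it_i = 1$ for every $i$, for each prime $p \mid n$ the smallest $j \ge 1$ with $p^j \nmid a_it_i$ for some $i$ is $j = 1$ (because $p \nmid 1$), so $\mathfrak{m}_p = 1$; and since $p^{\mathfrak{m}_p} = p \nmid 1 = a_it_i$ holds for \emph{all} $i$, we get $e_p = k$. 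Because $p \mid n$ forces $r_p \ge 1$, the inequality $\mathfrak{m}_p = 1 \le r_p$ holds for every relevant prime.

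Next I would dispose of the vanishing case. The ``no solution'' hypothesis of Theorem~\ref{th_gen_expl} requires a prime $p \mid n$ with either $\mathfrak{m}_p \le r_p$ and $p^{\mathfrak{m}_p - 1} \nmid b$, or $\mathfrak{m}_p \ge r_p + 1$ and $p^{r_p} \nmid b$. Here $\mathfrak{m}_p - 1 = 0$, so $p^{\mathfrak{m}_p - 1} = 1$ divides $b$ trivially, and $\mathfrak{m}_p = 1 \le r_p$ rules out the second alternative; hence the no-solution condition can never be met and we always land in the counting formula \eqref{main_prod_formula}.

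It then remains to simplify \eqref{main_prod_formula}. With $\mathfrak{m}_p = 1$ the condition $p^{\mathfrak{m}_p} \mid b$ indexing the first product becomes $p \mid b$, while the condition $p^{\mathfrak{m}_p - 1} \,\|\, b$ indexing the second product becomes $p^0 \,\|\, b$, i.e.\ $p \nmid b$; thus the two products run over exactly the primes $p \mid n$ with $p \mid b$ and with $p \nmid b$, respectively. Substituting $e_p = k$ turns the two bracketed factors into $1 - (-1)^{k-1}/(p-1)^{k-1}$ and $1 - (-1)^k/(p-1)^k$, matching the corollary. The one point requiring care---and the only genuine step---is the exponent bookkeeping: each factor carries $p^{\mathfrak{m}_p - r_p - 1} = p^{-r_p}$, and since every prime dividing $n$ lies in precisely one of the two products (according as $p \mid b$ or $p \nmid b$), these powers combine to $\prod_{p \mid n} p^{-r_p} = 1/n$. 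Pulling this together with the prefactor $\varphi(n)^k$ yields the stated expression, completing the proof.
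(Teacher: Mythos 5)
Your proposal is correct and matches the paper's own proof essentially step for step: both specialize Theorem~\ref{th_gen_expl} with $a_i=t_i=1$, observe that $\mathfrak{m}_p=1\leq r_p$ and $e_p=k$ for every prime $p\mid n$ so the no-solution condition can never hold, and then collect the factors $p^{\mathfrak{m}_p-r_p-1}=p^{-r_p}$ over all $p\mid n$ into $1/n$. Your write-up is in fact slightly more detailed than the paper's (e.g.\ spelling out that $p^{\mathfrak{m}_p-1}\,\|\,b$ with $\mathfrak{m}_p=1$ means $p\nmid b$), but there is no difference in approach.
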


\begin{proof}
Since $a_i=t_i=1$ $(1\leq i \leq k)$, for every prime divisor $p$ of $n$ we have $\mathfrak{m}_p=1$ and 
$e_p=k$. So, for every prime divisor $p$ of $n$ we also have $\mathfrak{m}_p=1 \leq r_p$. Clearly, the first part of Theorem~\ref{th_gen_expl} does not hold in this special case, that is, there is no prime $p\mid n$ such that $\mathfrak{m}_p\leq r_p$ and $p^{\mathfrak{m}_p-1}\nmid b$ or $\mathfrak{m}_p\geq r_p+1$ and $p^{r_p}\nmid b$. Furthermore, we have
\begin{align*}
\mathlarger{\prod}_{p\, \mid \, n, \,
p\, \mid\, b} \!p^{r_p}\mathlarger{\prod}_{p\, \mid\, n, \, p \, \nmid \, b}
\!p^{r_p}=n.
\end{align*}
Thus, the result follows by a simple application of the second part of Theorem~\ref{th_gen_expl}, \eqref{main_prod_formula}.
\end{proof}

We note that, while Theorem~\ref{th_gen_expl} may seem a bit complicated,  it is in fact easy to work with; see \cite[Ex. 3.11]{BKSTT} where we show, via several examples, how to apply Theorem~\ref{th_gen_expl}.

Theorem~\ref{th_gen_expl} is the only result in the literature which gives \textit{necessary and sufficient conditions} for the (non-)existence of solutions of restricted linear congruences in their most general case (see Corollary~\ref{cor_zero} below) and might lead to interesting applications/implications. For example, see \cite{BKSTT2} for applications in computer science and cryptography, and \cite{BKSTT} for connections to the generalized knapsack problem proposed by Micciancio. In this paper, we give more applications for this result.

\begin{corollary} {\rm (\cite{BKSTT})} \label{cor_zero} The restricted congruence given in Theorem~\ref{th_gen_expl} has no solutions if and only if one of the following cases holds:

(i) there is a prime $p\mid n$ with $\mathfrak{m}_p\leq r_p$ and $p^{\mathfrak{m}_p-1}\nmid b$;

(ii) there is a prime $p\mid n$ with $\mathfrak{m}_p\geq r_p+1$ and $p^{r_p}\nmid b$;

(iii) there is a prime $p\mid n$ with $\mathfrak{m}_p\leq r_p$, $e_p=1$ and $p^{\mathfrak{m}_p}\mid b$;

(iv) $n$ is even, $\mathfrak{m}_2 \leq r_2$, $e_2$ is odd and $2^{\mathfrak{m}_2}\mid b$;

(v) $n$ is even, $\mathfrak{m}_2 \leq r_2$, $e_2$ is even and $2^{\mathfrak{m}_2-1}\, \|\, b$.
\end{corollary}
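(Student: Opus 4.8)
The plan is to read the characterization directly off Theorem~\ref{th_gen_expl}, which already isolates two disjoint mechanisms by which the congruence can fail to have a solution: either the ``first-part'' obstruction fires, or the explicit product (\ref{main_prod_formula}) evaluates to zero. The first-part obstruction states that there is a prime $p\mid n$ with $\mathfrak{m}_p\leq r_p$ and $p^{\mathfrak{m}_p-1}\nmid b$, or with $\mathfrak{m}_p\geq r_p+1$ and $p^{r_p}\nmid b$; these are verbatim conditions~(i) and~(ii). So the entire content of the corollary reduces to deciding \emph{exactly when the product (\ref{main_prod_formula}) vanishes}, under the standing assumption that the first-part obstruction is absent, for only then does (\ref{main_prod_formula}) compute the number of solutions.

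First I would observe that the leading factor $\prod_{i=1}^{k}\varphi(n/t_i)$ is a product of positive integers and each power $p^{\mathfrak{m}_p-r_p-1}$ is a positive rational, so none of these can make the product zero. Consequently (\ref{main_prod_formula}) vanishes if and only if at least one of the bracketed factors is zero, namely a factor $1-\dfrac{(-1)^{e_p-1}}{(p-1)^{e_p-1}}$ coming from a prime $p$ in the first product (where $\mathfrak{m}_p\leq r_p$ and $p^{\mathfrak{m}_p}\mid b$), or a factor $1-\dfrac{(-1)^{e_p}}{(p-1)^{e_p}}$ coming from a prime $p$ in the second product (where $\mathfrak{m}_p\leq r_p$ and $p^{\mathfrak{m}_p-1}\,\|\,b$). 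The problem is now purely arithmetic: solve each equation in the unknowns $p$ (prime) and $e_p\geq 1$.

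The key step is the elementary analysis of these two equations. For the first factor, $1-\dfrac{(-1)^{e_p-1}}{(p-1)^{e_p-1}}=0$ means $(p-1)^{e_p-1}=(-1)^{e_p-1}$; since the left-hand side is a positive integer, this forces $e_p$ odd together with $(p-1)^{e_p-1}=1$, i.e.\ $e_p=1$ for any prime, or $p=2$ with $e_p$ odd. Combined with the membership condition $p^{\mathfrak{m}_p}\mid b$, these are precisely cases~(iii) and~(iv). For the second factor, $1-\dfrac{(-1)^{e_p}}{(p-1)^{e_p}}=0$ means $(p-1)^{e_p}=(-1)^{e_p}$, which forces $e_p$ even and $(p-1)^{e_p}=1$; since $e_p\geq 1$ this happens only for $p=2$ with $e_p$ even, and combined with the membership condition $p^{\mathfrak{m}_p-1}\,\|\,b$ this is exactly case~(v). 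In particular no odd prime can contribute through the second product, and an odd prime contributes through the first product only via $e_p=1$.

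Finally I would assemble the two implications. For the ``only if'' direction, a missing solution comes either from the first-part obstruction, giving (i) or (ii), or from a vanishing product, giving (iii), (iv), or (v) by the analysis above. For the ``if'' direction, conditions (i) and (ii) trigger the first-part obstruction directly, while for (iii)--(v) one argues that if the first-part obstruction happens to hold there is nothing to prove, and otherwise the number of solutions equals (\ref{main_prod_formula}), whose stated divisibility hypothesis on $b$ places the relevant prime in the correct product and makes the corresponding factor vanish. The main point requiring care is this logical bookkeeping, namely that (\ref{main_prod_formula}) only counts solutions when the first-part obstruction is absent, so the argument for (iii)--(v) must dispatch both possibilities; one should also check that the harmless overlap between (iii) and (iv) at $p=2$, $e_2=1$ causes no trouble.
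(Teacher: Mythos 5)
Your proposal is correct and is exactly the intended derivation: the paper states Corollary~\ref{cor_zero} as an immediate consequence of Theorem~\ref{th_gen_expl} (citing \cite{BKSTT} without reproducing a proof), and the argument is precisely your case split into the explicit non-existence conditions (giving (i)--(ii)) plus the analysis of when a bracketed factor of \eqref{main_prod_formula} vanishes, where $1-(-1)^{e_p-1}/(p-1)^{e_p-1}=0$ forces $e_p=1$ or $p=2$ with $e_p$ odd (giving (iii)--(iv)) and $1-(-1)^{e_p}/(p-1)^{e_p}=0$ forces $p=2$ with $e_2$ even (giving (v)). Your bookkeeping that \eqref{main_prod_formula} only counts solutions when the first-part obstruction is absent, and that the $p=2$, $e_2=1$ overlap between (iii) and (iv) is harmless, is the right care to take and completes the equivalence.
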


\section{Counting surface-kernel epimorphisms from $\Gamma$ to $\Z_n$}\label{Sec_4}

In this section, we obtain an explicit formula for the number of surface-kernel epimorphisms from a co-compact Fuchsian group to a cyclic group. First, we need a formula that connects the number of epimorphisms to the number of homomorphisms as, generally, enumerating homomorphisms is easier than enumerating epimorphisms.

The M\"{o}bius function and M\"{o}bius inversion were studied for functions over locally finite partially ordered sets (posets) first by Weisner \cite{WEI} and Hall \cite{HAL}, motivated by group theory problems. Later, Rota \cite{ROT} extended this idea and put it in the context of combinatorics. Following the argument given in \cite{HAL}, we prove the following simple result.

\begin{theorem} \label{epi-hom}
Let $\Lambda$ be a finitely generated group. Then

\begin{equation} \label{epi-hom_formula}
|\textnormal{Epi}(\Lambda,\Z_n)| = \mathlarger{\sum}_{d\, \mid\, n} \mu \!\left(\frac{n}{d}\right)|\textnormal{Hom}(\Lambda,\Z_d)|,
\end{equation}
where the summation is taken over all positive divisors $d$ of $n$.
\end{theorem}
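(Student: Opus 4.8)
The plan is to establish the relation
\[
|\textnormal{Hom}(\Lambda,\Z_n)| = \mathlarger{\sum}_{d\,\mid\,n} |\textnormal{Epi}(\Lambda,\Z_d)|
\]
and then invert it via the classical M\"{o}bius inversion formula \eqref{inver form}. The key observation is that every homomorphism $\phi:\Lambda\to\Z_n$ has an image that is a subgroup of $\Z_n$; since $\Z_n$ is cyclic, its subgroups are precisely the cyclic groups $\Z_d$ for each divisor $d\mid n$, and each such subgroup is \emph{unique}. A homomorphism $\phi$ with image exactly $\Z_d$ is, by definition, an epimorphism from $\Lambda$ onto that distinguished copy of $\Z_d$.

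First I would make the bijection between $\textnormal{Hom}(\Lambda,\Z_n)$ and the disjoint union over divisors precise. For each $d\mid n$, let $H_d$ denote the unique subgroup of $\Z_n$ of order $d$; note $H_d\cong\Z_d$. Every $\phi\in\textnormal{Hom}(\Lambda,\Z_n)$ factors uniquely as $\Lambda\twoheadrightarrow\textnormal{im}(\phi)=H_d\hookrightarrow\Z_n$ for exactly one $d$, namely $d=|\textnormal{im}(\phi)|$. This sets up a bijection between homomorphisms with image $H_d$ and surjective homomorphisms $\Lambda\twoheadrightarrow H_d$, i.e. elements of $\textnormal{Epi}(\Lambda,\Z_d)$. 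Summing over the divisors $d$ of $n$ partitions $\textnormal{Hom}(\Lambda,\Z_n)$ and yields the counting identity above.

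Then I would apply M\"{o}bius inversion directly. Setting $f(n):=|\textnormal{Epi}(\Lambda,\Z_n)|$ and $g(n):=|\textnormal{Hom}(\Lambda,\Z_n)|$, the identity just established reads $g(n)=\mathlarger{\sum}_{d\,\mid\,n} f(d)$. Both are genuine arithmetic functions of $n$ (for fixed $\Lambda$), and the finite generation of $\Lambda$ guarantees these counts are finite, so \eqref{inver form} applies and gives
\[
f(n)=\mathlarger{\sum}_{d\,\mid\,n}\mu\!\left(\frac{n}{d}\right)g(d),
\]
which is exactly \eqref{epi-hom_formula}.

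The main obstacle—really the only conceptual point—is justifying that the decomposition of $\textnormal{Hom}(\Lambda,\Z_n)$ is indexed cleanly by divisors, which rests on the fact that a cyclic group has a \emph{unique} subgroup of each order dividing its own. Without this uniqueness one would be counting homomorphisms into each abstract isomorphism type $\Z_d$ with a multiplicity, and the clean inversion would fail; for cyclic targets the multiplicity is exactly one, so the sum-over-divisors structure of M\"{o}bius inversion matches perfectly. Finite generation of $\Lambda$ is used only to ensure each $|\textnormal{Hom}(\Lambda,\Z_d)|$ is finite (a homomorphism is determined by the images of finitely many generators), so the arithmetic functions are well-defined.
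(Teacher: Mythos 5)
Your proof is correct and takes essentially the same approach as the paper: both decompose $\textnormal{Hom}(\Lambda,\Z_n)$ according to the image of each homomorphism, use the fact that $\Z_n$ has a unique subgroup $\Z_d$ for each divisor $d\mid n$ to obtain $|\textnormal{Hom}(\Lambda,\Z_n)| = \sum_{d\,\mid\,n}|\textnormal{Epi}(\Lambda,\Z_d)|$, and then apply the M\"{o}bius inversion formula \eqref{inver form}. Your explicit remark that finite generation of $\Lambda$ guarantees finiteness of the counts makes precise a point the paper leaves implicit.
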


\begin{proof}
Clearly, for a finitely generated group $\Lambda$ and a finite group $G$ we have 

\begin{equation*}
|\textnormal{Hom}(\Lambda,G)| = \mathlarger{\sum}_{H\leq G} |\textnormal{Epi}(\Lambda,H)|,
\end{equation*}
because every homomorphism from $\Lambda$ to $G$ induces a unique epimorphism from $\Lambda$ to its image in $G$.

Taking $G=\Z_n$, and since the cyclic group $\Z_n$ has a unique subgroup $\Z_d$ for every positive divisor $d$ of $n$ and has no other subgroups, we get

\begin{equation*}
|\textnormal{Hom}(\Lambda,\Z_n)| = \mathlarger{\sum}_{d\, \mid\, n} |\textnormal{Epi}(\Lambda,\Z_d)|.
\end{equation*}
Now, by applying the M\"{o}bius inversion formula, \eqref{inver form}, the theorem follows.
\end{proof}

We also need the following well-known result which gives equivalent defining formulas for {\it Jordan's totient function} $J_{k}(n)$ (see, e.g., \cite[pp. 13-14]{MCC}).

\begin{lemma} \label{mu pro}
Let $n$, $k$ be positive integers. Then 
\begin{equation} \label{mu pro formula}
J_{k}(n) = \mathlarger{\sum}_{d\, \mid\, n}d^k\mu \!\left(\frac{n}{d}\right) = n^k\mathlarger{\prod}_{p\, \mid\, n}\left(1-\frac{1}{p^k}\right),
\end{equation}
where the left summation is taken over all positive divisors $d$ of $n$, and the right product is taken over all prime divisors $p$ of $n$.
\end{lemma}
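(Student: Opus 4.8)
The plan is to establish the single nontrivial equality, namely that the divisor sum $\sum_{d\mid n} d^k \mu(n/d)$ equals the Euler product $n^k \prod_{p\mid n}(1 - 1/p^k)$; the displayed $J_k(n)$ is then merely a name for either side. I would argue the equality by expanding the finite product over the prime divisors of $n$ and matching it term-by-term against the sum, relying only on the definition \eqref{def2} of $\mu$.

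First I would rewrite the product. Expanding $\prod_{p\mid n}(1 - p^{-k})$ by choosing, for each prime divisor $p$ of $n$, either the factor $1$ or the factor $-p^{-k}$, produces a sum indexed by subsets $S$ of the set of prime divisors of $n$. Each subset $S$ corresponds to the squarefree divisor $d = \prod_{p\in S} p$ of $n$, and its contribution is $\prod_{p\in S}(-p^{-k}) = (-1)^{|S|} d^{-k} = \mu(d)\, d^{-k}$, since $\mu(d) = (-1)^{|S|}$ for a squarefree $d$ with $|S|$ prime factors. Hence $\prod_{p\mid n}(1 - p^{-k}) = \sum_{d\mid n,\ d\text{ squarefree}} \mu(d)\, d^{-k}$, and because $\mu$ vanishes on non-squarefree integers the squarefree restriction may be dropped, giving $\sum_{d\mid n}\mu(d)\, d^{-k}$.

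Next I would multiply through by $n^k$ to obtain $n^k\prod_{p\mid n}(1 - p^{-k}) = \sum_{d\mid n}\mu(d)(n/d)^k$, and then reindex the sum by the involution $d\mapsto n/d$ on the divisors of $n$; writing $e = n/d$ turns the right-hand side into $\sum_{e\mid n} e^k \mu(n/e)$, which is precisely the claimed divisor sum. This closes the chain of equalities.

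There is no serious obstacle, as the result is classical; the only point demanding a little care is the bookkeeping in the expansion, namely identifying each choice in the product with a unique squarefree divisor and justifying that extending the sum to all divisors is harmless because $\mu$ kills the non-squarefree terms. As an alternative route I could instead note that $n\mapsto \sum_{d\mid n} d^k\mu(n/d)$ is the Dirichlet convolution of the multiplicative functions $d\mapsto d^k$ and $\mu$, hence itself multiplicative, and then evaluate it at a prime power $p^a$, where the sum collapses to $p^{ak} - p^{(a-1)k} = p^{ak}(1 - p^{-k})$; taking the product over the prime powers dividing $n$ recovers the same formula.
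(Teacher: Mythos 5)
Your proof is correct, but there is nothing in the paper to compare it against: the paper does not prove this lemma at all, stating it as a well-known fact with a citation to McCarthy's book (pp.\ 13--14), so you have supplied an argument the authors delegated to a reference. Both of your routes are sound. The direct one is fine: expanding $\prod_{p\mid n}\left(1-p^{-k}\right)$ over subsets $S$ of the prime divisors of $n$, identifying each $S$ with the squarefree divisor $d=\prod_{p\in S}p$ and its contribution with $\mu(d)d^{-k}$, extending the sum to all divisors because $\mu$ kills non-squarefree ones, multiplying by $n^k$, and reindexing by the divisor involution $d\mapsto n/d$ yields exactly $\sum_{d\mid n}d^{k}\mu(n/d)$. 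Your alternative — observing that $d\mapsto d^k$ and $\mu$ are multiplicative, hence so is their Dirichlet convolution, and evaluating at $p^a$ where the sum collapses to $p^{ak}-p^{(a-1)k}=p^{ak}\left(1-p^{-k}\right)$ since $\mu(p^{a-j})=0$ unless $a-j\leq 1$ — is the standard textbook proof and is equally complete. You are also right that $J_k(n)$ in the statement is merely a label for the common value, so the one equality you prove is the entire content of the lemma.
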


Now, using the above results, we obtain an explicit formula for the number $|\text{Epi}_{\text{S}}(\Gamma,\Z_n)|$ of surface-kernel epimorphisms from a co-compact Fuchsian group $\Gamma$ to the cyclic group 
$\Z_n$.

\begin{theorem} \label{sepi result}
Let $\Gamma$ be a co-compact Fuchsian group with signature $(g;n_1,\ldots,n_k)$, and let $\mathfrak{n}:=\textnormal{lcm}(n_1,\ldots ,n_k)$. If $\mathfrak{n} \nmid n$ then there is no surface-kernel epimorphism from $\Gamma$ to $\Z_n$. Otherwise, the number of surface-kernel epimorphisms from $\Gamma$ to $\Z_n$ is
\begin{equation} \label{sepi_formula}
|\textnormal{Epi}_{\textnormal{S}}(\Gamma,\Z_n)| = \frac{n^{2g}}{\mathfrak{n}}\mathlarger{\prod}_{i=1}^{k} \varphi\left(n_i\right)\mathlarger{\prod}_{p\,\mid\, \frac{n}{\mathfrak{n}}}\left(1-\frac{1}{p^{2g}}\right)\mathlarger{\prod}_{p\,\mid\, \mathfrak{n}}\left(1-\frac{(-1)^{e_p-1}}{(p-1)^{e_p-1}} \right),
\end{equation}
where $e_p = \# \{i: 1\leq i\leq k, p\nmid \mathfrak{n}/n_i\}$.
\end{theorem}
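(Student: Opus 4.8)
The plan is to count surface-kernel homomorphisms into each cyclic subgroup $\Z_d$ of $\Z_n$, recognize that count as the solution-count of a single restricted linear congruence evaluated by Theorem~\ref{th_gen_expl}, and then recover the epimorphism count by Möbius inversion. First I would dispose of the easy necessity: any surface-kernel homomorphism sends each elliptic generator $\textsf{x}_i$ to an element of order exactly $n_i$, and the order of an element of $\Z_n$ divides $n$; hence $\mathfrak{n}=\textnormal{lcm}(n_1,\dots,n_k)\mid n$ is forced, and if this fails there is nothing to count.

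Next, fix a divisor $d$ of $n$ and count the surface-kernel homomorphisms $\phi\colon\Gamma\to\Z_d$. Because $\Z_d$ is abelian every commutator $[\textsf{a}_j,\textsf{b}_j]$ is killed, so the long relation in \eqref{Fuchsian relations} collapses to $\phi(\textsf{x}_1)+\cdots+\phi(\textsf{x}_k)\equiv 0\pmod d$, while the hyperbolic generators $\textsf{a}_j,\textsf{b}_j$ may be sent anywhere, contributing a free factor $d^{2g}$. Writing $x_i=\phi(\textsf{x}_i)$, the surface-kernel requirement that $x_i$ have order exactly $n_i$ in $\Z_d$ is precisely $\gcd(x_i,d)=d/n_i$. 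Thus, setting $t_i:=d/n_i$, $a_i:=1$, $b:=0$, the number of surface-kernel homomorphisms equals $d^{2g}N_d$, where $N_d=N_d(0;1,t_1,\dots,1,t_k)$ is the number of solutions of $x_1+\cdots+x_k\equiv 0\pmod d$ with $\gcd(x_i,d)=t_i$, and this is nonzero only when $\mathfrak{n}\mid d$.

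The heart of the argument, and the step I expect to be the main obstacle, is evaluating $N_d$ via Theorem~\ref{th_gen_expl} and checking that it does not actually depend on $d$. For a prime $p\mid d$, writing the $p$-adic valuations of $d$, $n_i$, $\mathfrak{n}$ as $r_p$, $s_i$, $\rho_p$ (so $\rho_p=\max_i s_i$), a careful valuation count applied to $a_it_i=d/n_i$ gives $\mathfrak{m}_p=r_p-\rho_p+1$ and $e_p=\#\{i:s_i=\rho_p\}=\#\{i:p\nmid \mathfrak{n}/n_i\}$, matching the theorem's $e_p$. Consequently $\mathfrak{m}_p\le r_p$ exactly when $p\mid\mathfrak{n}$; the no-solution alternatives of Theorem~\ref{th_gen_expl} never trigger because $b=0$ forces $p^{\mathfrak{m}_p}\mid b$ and empties the $p^{\mathfrak{m}_p-1}\,\|\,b$ product; and the surviving factor $p^{\mathfrak{m}_p-r_p-1}=p^{-\rho_p}$ multiplies over $p\mid\mathfrak{n}$ to $1/\mathfrak{n}$. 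Since $\varphi(d/t_i)=\varphi(n_i)$, this produces the clean value $N_d=\frac{1}{\mathfrak{n}}\prod_{i}\varphi(n_i)\prod_{p\mid\mathfrak{n}}\bigl(1-(-1)^{e_p-1}/(p-1)^{e_p-1}\bigr)$, visibly independent of $d$ once $\mathfrak{n}\mid d$. The delicate part is precisely this bookkeeping: one must verify that $\mathfrak{m}_p$, $e_p$, and the entire product are governed by $\mathfrak{n}$ and the $n_i$ alone.

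Finally I would assemble the formula. The proof of Theorem~\ref{epi-hom} applies verbatim to surface-kernel maps, since a surface-kernel homomorphism $\Gamma\to\Z_n$ restricts to a surface-kernel epimorphism onto its cyclic image $\Z_d$ (the order of each $\phi(\textsf{x}_i)$ is intrinsic), and conversely; hence Möbius inversion yields $|\textnormal{Epi}_{\textnormal{S}}(\Gamma,\Z_n)|=\sum_{d\mid n}\mu(n/d)\,d^{2g}N_d$. Letting $N$ denote the common value of $N_d$ and discarding the terms with $\mathfrak{n}\nmid d$ (where the homomorphism count vanishes), this is $N\sum_{\mathfrak{n}\mid d\mid n}\mu(n/d)\,d^{2g}$; substituting $d=\mathfrak{n}e$ with $e\mid(n/\mathfrak{n})$ rewrites the sum as $\mathfrak{n}^{2g}J_{2g}(n/\mathfrak{n})$, which Lemma~\ref{mu pro} evaluates to $n^{2g}\prod_{p\mid n/\mathfrak{n}}(1-p^{-2g})$. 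Multiplying by $N$ gives exactly \eqref{sepi_formula}, and the degenerate case $g=0$ is handled automatically, since $J_0(n/\mathfrak{n})$ equals $1$ when $n=\mathfrak{n}$ and $0$ otherwise, in agreement with the product $\prod_{p\mid n/\mathfrak{n}}(1-1)$.
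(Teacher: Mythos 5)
Your proposal is correct, and its overall skeleton --- M\"obius inversion (Theorem~\ref{epi-hom}), the identification $|\text{Hom}_{\text{S}}(\Gamma,\Z_d)| = d^{2g}N_d$ with $N_d$ a restricted-congruence count, evaluation of $N_d$ via Theorem~\ref{th_gen_expl}, and the Jordan-totient summation of Lemma~\ref{mu pro} --- coincides with the paper's. Where you genuinely diverge is at the crux: proving that $N_d$ is independent of $d$ for $\mathfrak{n}\mid d\mid n$. The paper establishes this indirectly: it invokes Theorem~\ref{thm:k var1 more} to write $N_d=\frac{1}{d}\sum_{d'\mid d}\varphi(d')\prod_i c_{n_i}(d/d')$, cites \cite[Prop.~9]{TOT} to convert this into the average $N_d=\frac{1}{d}\sum_{q=1}^{d}\prod_i c_{n_i}(q)$, and then uses periodicity of $c_{n_i}$ (period $n_i$, hence dividing $\mathfrak{n}$) to conclude $N_d=N_{\mathfrak{n}}$, after which it only needs the explicit formula in the easy case $d=\mathfrak{n}$, where $\mathfrak{m}_p=1$ for all $p\mid\mathfrak{n}$. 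You instead compute $N_d$ directly for an arbitrary multiple $d$ of $\mathfrak{n}$ by $p$-adic bookkeeping: from $a_it_i=d/n_i$ you get $\mathfrak{m}_p=r_p-\rho_p+1$ (with $\rho_p$ the valuation of $\mathfrak{n}$), so $\mathfrak{m}_p\le r_p$ exactly when $p\mid\mathfrak{n}$; the condition $p^{\mathfrak{m}_p}\nmid d/n_i$ unwinds to $p\nmid\mathfrak{n}/n_i$, making $e_p$ intrinsic; $b=0$ disables the non-existence clauses and empties the $p^{\mathfrak{m}_p-1}\,\|\,b$ product; and $p^{\mathfrak{m}_p-r_p-1}=p^{-\rho_p}$ multiplies over $p\mid\mathfrak{n}$ to $1/\mathfrak{n}$ --- all of which I checked and which is correct. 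Your route is more self-contained (no Ramanujan-sum averaging, no external citation beyond Theorem~\ref{th_gen_expl}) at the cost of heavier valuation computations, while the paper's periodicity argument gives a conceptual reason for the $d$-independence without unwinding $\mathfrak{m}_p$ for general $d$. You are also slightly more careful than the paper at one point: you justify why the inversion identity of Theorem~\ref{epi-hom} restricts to surface-kernel maps (the order of each $\phi(\textsf{x}_i)$ is unchanged when the codomain is cut down to the image), a step the paper applies tacitly; and your closing observation that $J_0(n/\mathfrak{n})=0$ unless $n=\mathfrak{n}$ cleanly recovers the $g=0$ clause of Harvey's theorem.
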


\begin{proof}
By Theorem~\ref{epi-hom}, we have 
\begin{equation} \label{sepi-shom}
|\text{Epi}_{\text{S}}(\Gamma,\Z_n)| = \mathlarger{\sum}_{d\, \mid\, n} \mu \!\left(\frac{n}{d}\right)|\text{Hom}_{\text{S}}(\Gamma,\Z_d)|,
\end{equation}
where $|\text{Hom}_{\text{S}}(\Gamma,\Z_d)|$ is the number of surface-kernel homomorphisms from $\Gamma$ to $\Z_d$. It is easy to see that for every positive divisor $d$ of $n$ we have $|\text{Hom}_{\text{S}}(\Gamma,\Z_d)| = d^{2g}N_d$, where $N_d$ is the number of solutions of the restricted linear congruence $x_1+\cdots +x_k\equiv 0 \pmod{d}$, with $\gcd(x_i,d)=\frac{d}{n_i}$ {\rm ($1\leq i\leq k$)}. Suppose that $\mathfrak{D}:=\{d>0: d \mid n \; \mbox{and} \: \mathfrak{n}\mid d\}$. Clearly, if $\mathfrak{D}$ is empty then $|\textnormal{Hom}_{\textnormal{S}}(\Gamma,\Z_d)| = 0$, for every divisor $d$ of $n$, which then implies that $|\textnormal{Epi}_{\textnormal{S}}(\Gamma,\Z_n)|=0$, by \eqref{sepi-shom}. Let $\mathfrak{n} \nmid n$. Then $\mathfrak{n} \nmid d$, for every divisor $d$ of $n$. Thus, $\mathfrak{D}$ is empty which then implies that $|\textnormal{Epi}_{\textnormal{S}}(\Gamma,\Z_n)|=0$, by \eqref{sepi-shom}. Now, let $\mathfrak{n} \mid n$. Then there exists at least one divisor $d$ of $n$ such that $\mathfrak{n} \mid d$. So, $\mathfrak{D}$ is non-empty. Now, for every $d \in \mathfrak{D}$, by Theorem~\ref{th_gen_expl}, we have
\begin{equation}\label{nd prod}
N_d=\mathlarger{\prod}_{i=1}^{k} \varphi\left(n_i\right)
\mathlarger{\prod}_{\substack{p\,\mid\, d \\ \mathfrak{m}_p \,\leq \, r_p}} p^{\mathfrak{m}_p-r_p-1} \left(1-\frac{(-1)^{e_p-1}}{(p-1)^{e_p-1}} \right),
\end{equation}
where $r_p$ is the exponent of $p$ in the prime factorization of $d$, $\mathfrak{m}_p$ is the smallest 
$j\geq 1$ such that there is some $i$ with $p^j \nmid \frac{d}{n_i}$, and 
$
e_p = \# \{i: 1\leq i\leq k, p^{\mathfrak{m}_p}\nmid d/n_i\}.
$
On the other hand, by Theorem~\ref{thm:k var1 more}, we have 
\begin{equation*}
N_d=\frac{1}{d}\mathlarger{\sum}_{d'\, \mid\, d} \varphi(d')\mathlarger{\prod}_{i=1}^{k}c_{n_i}\! \left(\frac{d}{d'}\right),
\end{equation*}
which, as was proved in \cite[Prop.\ 9]{TOT}, equals 
\begin{equation*}
N_d=\frac{1}{d}\mathlarger{\sum}_{q=1}^{d} \mathlarger{\prod}_{i=1}^{k}c_{n_i}(q).
\end{equation*}
Now, since the Ramanujan sum $c_n(m)$ is a periodic function of $m$ with period $n$, it is easy to see (from the above equivalent expressions) that the value of $N_d$ will remain the same if we replace $d$ with $\mathfrak{n}$ in \eqref{nd prod}. Consequently, we obtain the following explicit formula for the number of surface-kernel homomorphisms from $\Gamma$ to $\Z_d$,
\begin{equation*}
|\text{Hom}_{\text{S}}(\Gamma,\Z_d)| = d^{2g}\mathlarger{\prod}_{i=1}^{k} \varphi\left(n_i\right)\mathlarger{\prod}_{\substack{p\,\mid\, \mathfrak{n} \\ \mathfrak{m}_p \,\leq \, r_p}} p^{\mathfrak{m}_p-r_p-1} \left(1-\frac{(-1)^{e_p-1}}{(p-1)^{e_p-1}} \right),
\end{equation*}
where $r_p$ is the exponent of $p$ in the prime factorization of $\mathfrak{n}$, $\mathfrak{m}_p$ is the smallest $j\geq 1$ such that there is some $i$ with $p^j \nmid \frac{\mathfrak{n}}{n_i}$, and 
$
e_p = \# \{i: 1\leq i\leq k, p^{\mathfrak{m}_p}\nmid \mathfrak{n}/n_i\}.
$

Note that since $\mathfrak{n}=\textnormal{lcm}(n_1,\ldots ,n_k)$, for every prime divisor $p$ of $\mathfrak{n}$ we have $p \nmid \frac{\mathfrak{n}}{n_i}$ for at least one $i$. This means that $\mathfrak{m}_p=1$ for every prime divisor $p$ of $\mathfrak{n}$. Also, note that
$$
\mathlarger{\prod}_{p\,\mid\, \mathfrak{n}} p^{r_p}=\mathfrak{n}.
$$
Therefore, we get 
\begin{equation*}
|\text{Hom}_{\text{S}}(\Gamma,\Z_d)| = \frac{d^{2g}}{\mathfrak{n}}\mathlarger{\prod}_{i=1}^{k} \varphi\left(n_i\right)\mathlarger{\prod}_{p\,\mid\, \mathfrak{n}} \left(1-\frac{(-1)^{e_p-1}}{(p-1)^{e_p-1}} \right),
\end{equation*}
where $e_p = \# \{i: 1\leq i\leq k, p\nmid \mathfrak{n}/n_i\}$.

Now, using (\ref{sepi-shom}), letting $d=v\mathfrak{n}$, and then using Lemma~\ref{mu pro}, we obtain 
\begin{align*}
|\textnormal{Epi}_{\textnormal{S}}(\Gamma,\Z_n)|&= \mathlarger{\sum}_{\mathfrak{n}\, \mid\, d\, \mid\, n} \mu \!\left(\frac{n}{d}\right)\frac{d^{2g}}{\mathfrak{n}}\mathlarger{\prod}_{i=1}^{k} \varphi\left(n_i\right)\mathlarger{\prod}_{p\,\mid\, \mathfrak{n}} \left(1-\frac{(-1)^{e_p-1}}{(p-1)^{e_p-1}} \right)\\
&= \mathlarger{\sum}_{v\, \mid\, \frac{n}{\mathfrak{n}}} \mu \!\left(\frac{n/\mathfrak{n}}{v}\right)v^{2g}\mathfrak{n}^{2g-1}\mathlarger{\prod}_{i=1}^{k} \varphi\left(n_i\right)\mathlarger{\prod}_{p\,\mid\, \mathfrak{n}} \left(1-\frac{(-1)^{e_p-1}}{(p-1)^{e_p-1}} \right)\\
&= \frac{n^{2g}}{\mathfrak{n}}\mathlarger{\prod}_{i=1}^{k} \varphi\left(n_i\right)\mathlarger{\prod}_{p\,\mid\, \frac{n}{\mathfrak{n}}}\left(1-\frac{1}{p^{2g}}\right)\mathlarger{\prod}_{p\,\mid\, \mathfrak{n}} \left(1-\frac{(-1)^{e_p-1}}{(p-1)^{e_p-1}} \right),
\end{align*}
where $e_p = \# \{i: 1\leq i\leq k, p\nmid \mathfrak{n}/n_i\}$.
\end{proof}

\begin{example} \label{examp epi}

1) Let $\Gamma$ be the co-compact Fuchsian group with signature $(1;2,3,4)$. Find the number of surface-kernel epimorphisms from $\Gamma$ to $\Z_{24}$. 

\bigskip

Here $\mathfrak{n}=\textnormal{lcm}(2,3,4)=12=2^2\cdot 3$. Also, $2\mid \frac{\mathfrak{n}}{n_1}=6$, $2\mid \frac{\mathfrak{n}}{n_2}=4$, $2\nmid \frac{\mathfrak{n}}{n_3}=3$. So, $e_2=1$. Therefore, by Theorem~\ref{sepi result}, we have 
$$
|\textnormal{Epi}_{\textnormal{S}}(\Gamma,\Z_{24})|=0,
$$
because 
$$
1-\frac{(-1)^{e_2-1}}{(2-1)^{e_2-1}}=1-\frac{(-1)^{1-1}}{1^{1-1}}=0.
$$
Of course, this example can be also followed directly by Harvey's theorem (Theorem~\ref{Harvey Thm}).

2) Let $\Gamma$ be the co-compact Fuchsian group with signature $(2;36,500,125,9)$. Find the number of surface-kernel epimorphisms from $\Gamma$ to $\Z_{9000}$. 

\bigskip

Here $\mathfrak{n}=\textnormal{lcm}(36,500,125,9)=\textnormal{lcm}(2^2\cdot 3^2,2^2\cdot 5^3,5^3,3^2)=2^2\cdot 3^2\cdot 5^3=4500$. We have

\bigskip

$2\nmid \frac{\mathfrak{n}}{n_1}=5^3$, $2\nmid \frac{\mathfrak{n}}{n_2}=3^2$, $2\mid \frac{\mathfrak{n}}{n_3}=2^2\cdot 3^2$, $2\mid \frac{\mathfrak{n}}{n_4}=2^2\cdot 5^3$, so, $e_2=2$; 

$3\nmid \frac{\mathfrak{n}}{n_1}=5^3$, $3\mid \frac{\mathfrak{n}}{n_2}=3^2$, $3\mid \frac{\mathfrak{n}}{n_3}=2^2\cdot 3^2$, $3\nmid \frac{\mathfrak{n}}{n_4}=2^2\cdot 5^3$, so, $e_3=2$;

$5\mid \frac{\mathfrak{n}}{n_1}=5^3$, $5\nmid \frac{\mathfrak{n}}{n_2}=3^2$, $5\nmid \frac{\mathfrak{n}}{n_3}=2^2\cdot 3^2$, $5\mid \frac{\mathfrak{n}}{n_4}=2^2\cdot 5^3$, so, $e_5=2$.

\bigskip
Now,
$$
\mathlarger{\prod}_{p\,\mid\, 4500}\left(1-\frac{(-1)^{e_p-1}}{(p-1)^{e_p-1}} \right)= \left(1-\frac{(-1)^{2-1}}{(2-1)^{2-1}}\right)\left(1-\frac{(-1)^{2-1}}{(3-1)^{2-1}} \right)\left(1-\frac{(-1)^{2-1}}{(5-1)^{2-1}} \right)= \frac{15}{4}.
$$

Therefore, by Theorem~\ref{sepi result}, we have 
$$
|\textnormal{Epi}_{\textnormal{S}}(\Gamma,\Z_{9000})|= \frac{9000^{4}}{4500}\varphi\left(2^2\cdot 3^2\right)\varphi\left(2^2\cdot 5^3\right)\varphi\left(5^3\right)\varphi\left(3^2\right)\left(1-\frac{1}{2^4}\right) \times \frac{15}{4}= 7381125\cdot 10^{12}.
$$

3) Let $\Gamma$ be the co-compact Fuchsian group with signature $(0;36,500,125,9)$. Find the number of surface-kernel epimorphisms from $\Gamma$ to $\Z_{9000}$. 

\bigskip

Here since $g=0$, 
$$
\mathlarger{\prod}_{p\,\mid\, \frac{n}{\mathfrak{n}}}\left(1-\frac{1}{p^{2g}}\right)=\mathlarger{\prod}_{p\,\mid\, 2}\left(1-\frac{1}{p^0}\right)=0.
$$
Therefore, by Theorem~\ref{sepi result}, we have 
$$
|\textnormal{Epi}_{\textnormal{S}}(\Gamma,\Z_{9000})|=0.
$$
Of course, this example can be also followed directly by Harvey's theorem.
\end{example}

\begin{rema}
In the proof of Theorem~\ref{sepi result} we have used only a special case of Theorem~\ref{th_gen_expl} where $a_i=1$ $(1\leq i \leq k)$ and $b=0$. But there may be other generalizations/variants of these or other groups so that for counting the number of surface-kernel epimorphisms (or other relevant problems) we have to use the `full power' of Theorem~\ref{th_gen_expl}.
\end{rema}

\begin{rema}
In order to get explicit values for $|\textnormal{Epi}_{\textnormal{S}}(\Gamma,\Z_n)|$ from Theorem~\ref{sepi result}, we only need to find the prime factorization of $n$, of $\mathfrak{n}$, and of the periods $n_1,\ldots,n_k$. Then we can easily compute $e_p$, $\varphi\left(n_i\right)$, etc. In fact, even for Harvey's theorem (Theorem~\ref{Harvey Thm}) we need to find these prime factorizations! So, Theorem~\ref{sepi result} has roughly the same computational cost as Harvey's theorem.
\end{rema}

Clearly, for a co-compact Fuchsian group with all periods equal to each other we have $e_p = \# \{i: 1\leq i\leq k, p\nmid \mathfrak{n}/n_i\}=k$, for every prime divisor $p$ of $\mathfrak{n}$. Therefore, we get the following simpler formula from Theorem~\ref{sepi result}.

\begin{corollary} \label{sepi eqper}
Let $\Gamma$ be a co-compact Fuchsian group with signature $(g;n_1,\ldots,n_k)$, where $n_1=\cdots=n_k=\mathfrak{n}$. If $\mathfrak{n} \nmid n$ then there is no surface-kernel epimorphism from $\Gamma$ to 
$\Z_n$. Otherwise, the number of surface-kernel epimorphisms from $\Gamma$ to $\Z_n$ is
\begin{equation} \label{sepi eqper_formula}
|\textnormal{Epi}_{\textnormal{S}}(\Gamma,\Z_n)| = \frac{n^{2g}\varphi(\mathfrak{n})^k}{\mathfrak{n}}\mathlarger{\prod}_{p\,\mid\, \frac{n}{\mathfrak{n}}}\left(1-\frac{1}{p^{2g}}\right)\mathlarger{\prod}_{p\,\mid\, \mathfrak{n}} \left(1-\frac{(-1)^{k-1}}{(p-1)^{k-1}} \right).
\end{equation}
\end{corollary}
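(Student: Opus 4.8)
The plan is to derive this directly from Theorem~\ref{sepi result} by specializing to the case $n_1 = \cdots = n_k = \mathfrak{n}$. Since the hypothesis of the corollary is a special case of that of Theorem~\ref{sepi result}, the non-existence assertion (when $\mathfrak{n} \nmid n$) carries over verbatim, and the only remaining task is to simplify the explicit formula \eqref{sepi_formula} under the equal-periods assumption.

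First I would compute the quantities $e_p$ that appear in \eqref{sepi_formula}. Because every period equals $\mathfrak{n}$, we have $\mathfrak{n}/n_i = 1$ for each $i$, and since no prime $p$ divides $1$, the condition $p \nmid \mathfrak{n}/n_i$ holds for all $i$ simultaneously. Hence $e_p = \#\{i : 1 \le i \le k,\ p \nmid \mathfrak{n}/n_i\} = k$ for every prime divisor $p$ of $\mathfrak{n}$. This single observation is the entire substantive content of the argument.

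Next I would substitute $e_p = k$ and $n_i = \mathfrak{n}$ into \eqref{sepi_formula}. The factor $\prod_{i=1}^{k}\varphi(n_i)$ collapses to $\varphi(\mathfrak{n})^k$, while the final product becomes $\prod_{p\,\mid\,\mathfrak{n}}\left(1 - \frac{(-1)^{k-1}}{(p-1)^{k-1}}\right)$; the middle product over $p \mid n/\mathfrak{n}$ is left unchanged. Collecting these factors reproduces \eqref{sepi eqper_formula} exactly.

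There is no real obstacle here: the corollary is a clean specialization of Theorem~\ref{sepi result}, and all of its force resides in recognizing that equal periods force $e_p = k$ uniformly across the prime divisors of $\mathfrak{n}$.
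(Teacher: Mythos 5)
Your proposal is correct and follows exactly the paper's own route: the paper likewise observes that equal periods give $\mathfrak{n}/n_i = 1$, hence $e_p = k$ for every prime $p \mid \mathfrak{n}$, and then reads off the formula from Theorem~\ref{sepi result}. Nothing is missing, and the non-existence case carries over verbatim as you note.
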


Interestingly, using Theorem~\ref{sepi result}, we can obtain an `equivalent' form of Harvey's theorem (Theorem~\ref{Harvey Thm}). See also \cite{LIS}. Note that conditions (i), (iii) in Corollary~\ref{Harvey Equ} are exactly the same as, respectively, conditions (ii), (iv) in Harvey's theorem.

\begin{corollary} \label{Harvey Equ} Let $\Gamma$ be a co-compact Fuchsian group with signature $(g;n_1,\ldots,n_k)$, and let $\mathfrak{n}:=\textnormal{lcm}(n_1,\ldots ,n_k)$. There is a surface-kernel epimorphism from $\Gamma$ to $\Z_n$ if and only if the following conditions are satisfied:

(i) $\mathfrak{n}\mid n$, and if $g = 0$ then $\mathfrak{n}=n$;

(ii) $e_p>1$ for every prime divisor $p$ of $\mathfrak{n}$;

(iii) if $\mathfrak{n}$ is even then $e_2$ is also even.
\end{corollary}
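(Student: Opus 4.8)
The plan is to read off the necessary and sufficient conditions directly from the explicit formula \eqref{sepi_formula} of Theorem~\ref{sepi result}. The key observation is that a surface-kernel epimorphism from $\Gamma$ to $\Z_n$ exists if and only if $|\textnormal{Epi}_{\textnormal{S}}(\Gamma,\Z_n)|>0$; and since this quantity is a count, hence a nonnegative integer equal to the right-hand side of \eqref{sepi_formula}, it is positive precisely when it is nonzero, i.e.\ when none of the factors in \eqref{sepi_formula} vanishes. Thus the whole argument reduces to determining, factor by factor, exactly when \eqref{sepi_formula} can be zero, and matching the resulting non-vanishing conditions with (i)--(iii).

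First I would dispose of the factors that never cause trouble. The prefactor $\frac{n^{2g}}{\mathfrak{n}}\prod_{i=1}^{k}\varphi(n_i)$ is always strictly positive, since $\mathfrak{n}\mid n$ forces $n\geq 1$ and each $\varphi(n_i)\geq 1$, so it plays no role. The second product $\prod_{p\mid n/\mathfrak{n}}(1-1/p^{2g})$ is the only place the genus enters: for $g\geq 1$ each factor lies strictly between $0$ and $1$, so the product is positive, whereas for $g=0$ one has $1-1/p^{0}=0$, so the product vanishes as soon as some prime divides $n/\mathfrak{n}$, i.e.\ as soon as $\mathfrak{n}\neq n$. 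Combined with the hypothesis $\mathfrak{n}\mid n$ already built into Theorem~\ref{sepi result} (without which the count is $0$), this is exactly condition (i).

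The substance lies in the last product $\prod_{p\mid\mathfrak{n}}\bigl(1-\frac{(-1)^{e_p-1}}{(p-1)^{e_p-1}}\bigr)$. Here I would first record that $e_p\geq 1$ for every prime $p\mid\mathfrak{n}$, because $\mathfrak{n}=\textnormal{lcm}(n_1,\ldots,n_k)$ guarantees that the full $p$-power of $\mathfrak{n}$ already appears in some $n_i$, whence $p\nmid\mathfrak{n}/n_i$ for that $i$. Writing $f(p,e)=1-\frac{(-1)^{e-1}}{(p-1)^{e-1}}$, the analysis then splits by the prime. For an odd prime $p$ one has $(p-1)^{e-1}\geq 2^{e-1}$, so $f(p,e)=0$ iff $e=1$, while $f(p,e)>0$ for $e\geq 2$; this yields the requirement $e_p>1$ of condition (ii). For $p=2$ the denominator collapses, $f(2,e)=1-(-1)^{e-1}$, which is $0$ when $e$ is odd and $2$ when $e$ is even; this yields condition (iii), that $e_2$ be even whenever $\mathfrak{n}$ is even.

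I expect the main (if modest) obstacle to be precisely this degenerate behaviour at $p=2$: because $p-1=1$, the odd-prime reasoning breaks down, and it is the \emph{parity} of $e_2$, rather than merely $e_2\neq 1$, that controls vanishing. One must therefore note that conditions (ii) and (iii) overlap at $p=2$ — the clause of (iii) forcing $e_2$ to be even already implies $e_2>1$ — so that (ii)--(iii) are jointly equivalent to non-vanishing of the full product, with the $p=2$ case of (ii) automatically subsumed by (iii). Assembling the three factor analyses then gives $|\textnormal{Epi}_{\textnormal{S}}(\Gamma,\Z_n)|>0$ if and only if (i), (ii), and (iii) all hold, which is the claimed equivalence.
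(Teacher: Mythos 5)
Your proposal is correct and is exactly the paper's argument: the paper's proof consists of the single observation that one applies the first part of Theorem~\ref{sepi result} and examines when the factors in \eqref{sepi_formula} vanish, which is precisely your factor-by-factor analysis (including the degenerate $p=2$ case, which the paper leaves implicit and you work out correctly). Your additional remarks --- that $e_p\geq 1$ always holds, that all factors are nonnegative so the product vanishes iff some factor does, and that (iii) subsumes the $p=2$ case of (ii) --- are accurate fillings-in of details the paper omits.
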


\begin{proof} The proof simply follows by using the first part of Theorem~\ref{sepi result} and examining the conditions under which the factors of the products in \eqref{sepi_formula} do not vanish.
\end{proof}

It is an interesting problem to develop these counting arguments for the classes of non-cyclic groups. Such results would be very important from several aspects, for example, may lead to more extensions of Harvey's theorem and new proofs for the existing ones, and also may provide us new ways for dealing with the minimum genus and maximum order problems for these classes of groups. So, we pose the following question.

\bigskip

\noindent{\textbf{Problem 1.}} Give explicit formulas for the number of surface-kernel epimorphisms from a co-compact Fuchsian group to a non-cyclic group, say, Abelian, solvable, dihedral, etc.

\section*{Acknowledgements}

The authors would like to thank the anonymous referees for helpful comments. During the preparation of this work the first author was supported by a Fellowship from the University of Victoria (UVic Fellowship).

\end{document}